\theoremstyle{plain}
\newtheorem{theorem}{Theorem}
\newtheorem{lemma}{Lemma}
\theoremstyle{definition}
\newtheorem{remark}{Remark}
\DeclareMathOperator{\red}{red}
\DeclareMathOperator{\reg}{reg}
\newcommand{\enm}[1]{\ensuremath{#1}}          %
\newcommand{\cal}[1]{\mathcal{#1}}
\newcommand{\ZZ}{\enm{\mathbb{Z}}}
\newcommand{\PP}{\enm{\mathbb{P}}}
\newcommand{\Ii}{\enm{\cal{I}}}
\newcommand{\Oo}{\enm{\cal{O}}}
\begin{document}

\title[Severi variety]
{On the irreducibility of the Severi variety of nodal curves in a smooth surface}
\author{Edoardo Ballico}
\address{Dept. of Mathematics\\
 University of Trento\\
38123 Povo (TN), Italy}
\email{ballico@science.unitn.it}
\thanks{The author was partially supported by MIUR and GNSAGA of INdAM (Italy).}
\subjclass[2010]{14H10;14J99;14C20}
\keywords{K3 surface; Severi variety; nodal curve; Hilbert scheme of nodal curves}

\begin{abstract}
Let $X$ be a smooth projective surface and $L\in \mathrm{Pic}(X)$. We prove that if $L$ is $(2k-1)$-spanned, then the set
$\tilde{V}_k(L)$ of all nodal and irreducible $D\in |L|$ with exactly $k$ nodes is irreducible. The set $\tilde{V}_k(L)$
is an open subset of a Severi variety of $|L|$, the full Severi variety parametrizing all integral $D\in |L|$ with geometric
genus
$g(L)-k$.
\end{abstract}

\maketitle

\section{Introduction}
Let $X$ be a smooth projective variety and $L\in \mathrm{Pic}(X)$. We recall that a zero-dimensional scheme $Z\subset X$ is
said to be \emph{curvilinear} if for each $p\in Z_{\red}$ the Zariski tangent space of $Z$ at $p$ has at most dimension $1$
(this is equivalent to the existence of a smooth curve $C\subset X$ such that $Z\subset C$). We recall that the pair
$(X,L)$ is said to be
$k$-spanned (resp. $k$-very ample) if for each curvilinear zero-dimensional scheme (resp. each zero-dimensional scheme)
$Z\subset X$ the restriction map $H^0(X,L)\to H^0(Z,L_{|Z})$ is surjective (\cite[pages 225 and 278]{bs}). Obviously $k$-very ampleness implies
$k$-spannedness, but in a very particular case (but a very important case: smooth K3 surfaces and smooth Enriques surfaces) the two
notions coincide (\cite[Theorems 1.1 and 1.2]{kn}). Now assume that $S$ is a smooth surface and that $L$ is very ample. The
genus
$g(L)$ of
$L$ is the arithmetic genus of any $D\in |L|$, i.e. $g(L) = 1 + (L^2+\omega _X\cdot L)/2$ (adjunction formula). Let $V_k(L)$
denote
the set of all integral $D\in |L|$ with geometric genus $g(L)-k$. Let $\tilde{V}_k(L)$ be the set of all integral $D\in
|L|$ with exactly $k$ ordinary nodes as its only singularities. Obviously $\tilde{V}_k(X)$ is an open subset of $V_k(L)$. In
the terminology of \cite{cd} we have $V^L_{g(L)-k} = V_k(L)$ and $V^{L,k} = \tilde{V}_k(L)$. In many cases to check the
irreducibility of
$V_k(L)$ (which of course only holds under certain assumptions) one first proves that $\tilde{V}_k(X)$ is irreducible and then
one proves that
$\tilde{V}_k(X)$ is dense in
$V_k(L)$. 

In this note we prove the following result.

\begin{theorem}\label{i1}
Let $X$ be a smooth projective surface. Fix a very ample $L\in \mathrm{Pic}(X)$ and a positive integer $k$. If $L$ is
$(2k-1)$-spanned, then $\tilde{V}_k(L)$ is either irreducible of dimension $h^0(X,L) - 1-k$ or
$\tilde{V}_k(L) =\emptyset$.
\end{theorem}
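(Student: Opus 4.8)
The plan is to realize $\tilde V_k(L)$, when it is nonempty, as the total space of a smooth surjective morphism with irreducible base and connected fibres; if $\tilde V_k(L)=\emptyset$ there is nothing to prove, so assume otherwise and put $N:=h^0(X,L)-1=\dim|L|$. The engine of the argument — and the only place where $(2k-1)$-spannedness is used to the full — is the following claim: \emph{if $L$ is $(2k-1)$-spanned and $p_1,\dots,p_k\in X$ are distinct, then the scheme $Z:=2p_1+\cdots+2p_k$ (with $2p_i$ defined by $\mathfrak m_{p_i}^2$) imposes $3k$ independent conditions on $|L|$, i.e., the restriction $H^0(X,L)\to H^0(Z,L_{|Z})$ is surjective.} I would prove this by peeling the fat points off one at a time, by induction on their number. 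Splitting sections along the disjoint supports, $Z$ imposes $3k$ independent conditions provided $2p_1+\cdots+2p_{k-1}$ imposes $3(k-1)$ and $H^0(X,\mathcal I_{2p_1+\cdots+2p_{k-1}}\otimes L)$ surjects onto $H^0(\mathcal O_{2p_k}\otimes L)$, the space of $2$-jets at $p_k$. For the latter I would use the elementary fact that a linear subspace of the $3$-dimensional space of $2$-jets at a smooth point $p$ of a surface which, for every tangent direction $v$ at $p$, surjects onto the $2$-dimensional space of $1$-jets along $v$, must be the whole space: the kernels of those surjections are exactly the lines of the $2$-plane of jets vanishing at $p$, and a $2$-dimensional subspace of the $3$-space meets that plane in a line, hence lies in one of the kernels. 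So it suffices that, for every $v$ at $p_k$, the scheme $2p_1+\cdots+2p_{k-1}+\langle p_k,v\rangle$ impose independent conditions, $\langle p_k,v\rangle$ being the length-$2$ curvilinear scheme in the direction $v$; this has one fewer fat point. Iterating, every $2p_i$ is traded for a $\langle p_i,v_i\rangle$ and the claim reduces to: the curvilinear scheme $\langle p_1,v_1\rangle+\cdots+\langle p_k,v_k\rangle$, of length $2k$, imposes independent conditions — which is exactly $(2k-1)$-spannedness. I expect this lemma to be the main obstacle; the rest is formal.

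Granting it, $\tilde V_k(L)$ is smooth of pure dimension $N-k$. For $[D]\in\tilde V_k(L)$ with local equation $f$ and node set $S=\{p_1,\dots,p_k\}$, a standard first-order computation gives $T_{[D]}\tilde V_k(L)=H^0(X,\mathcal I_S\otimes L)/\langle f\rangle$ (the curve $V(f+tg)$ keeps a node near a node $p$ of $D$ to first order precisely when $g(p)=0$), and since $S$ is reduced, hence curvilinear, of length $k$, already $(k-1)$-spannedness gives $h^0(\mathcal I_S\otimes L)=h^0(L)-k$, so $\dim T_{[D]}\tilde V_k(L)=N-k$. This meets the standard lower bound $\dim_{[D]}\tilde V_k(L)\ge N-k$ (near $[D]$ the set $\tilde V_k(L)$ is dominated by the subvariety of $|L|\times(X^k\setminus\Delta)$ cut out by the $3k$ equations: vanishing of $f$ and of its two first partials at each of the $k$ points), whence smoothness.

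Finally consider the morphism $\rho\colon\tilde V_k(L)\to T$, $D\mapsto\Sing(D)$, where $T\subset\mathrm{Hilb}^k(X)$ is the open locus of reduced length-$k$ subschemes, irreducible of dimension $2k$ since $X$ is irreducible. The rank of $d\rho$ at $[D]$ equals that of the natural map $H^0(X,\mathcal I_S\otimes L)\to\bigoplus_i\mathfrak m_{p_i}/\mathfrak m_{p_i}^2$ taking the linear part at each node, whose kernel is $H^0(X,\mathcal I_{2S}\otimes L)$; since the lemma gives $h^0(\mathcal I_{2S}\otimes L)=h^0(L)-3k$, that rank is $(h^0(L)-k)-(h^0(L)-3k)=2k$, so $d\rho$ is surjective. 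Hence $\rho$ is a smooth morphism, in particular open, so $U:=\rho(\tilde V_k(L))$ is a nonempty open — hence irreducible and connected — subset of $T$, while for $S\in U$ the fibre $\rho^{-1}(S)$ is a nonempty open subset of $\mathbb P\bigl(H^0(X,\mathcal I_{2S}\otimes L)\bigr)\cong\mathbb P^{\,N-3k}$, in particular connected. A smooth surjection onto a connected base with connected fibres has connected total space: if $\tilde V_k(L)=E_1\sqcup E_2$ with the $E_i$ open and closed, the open sets $\rho(E_1),\rho(E_2)$ cover the connected set $U$, so meet at some $S$, and the connected fibre $\rho^{-1}(S)$ meets both $E_i$ — a contradiction. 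Therefore $\tilde V_k(L)$ is connected; being smooth, it is irreducible, of dimension $2k+(N-3k)=N-k=h^0(X,L)-1-k$.
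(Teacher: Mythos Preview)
Your proof is correct and follows essentially the same route as the paper: the key step in both is that $(2k-1)$-spannedness forces $h^0(\mathcal I_{2S}\otimes L)=h^0(L)-3k$ for \emph{every} $k$-element set $S\subset X$ (your inductive ``peeling off'' argument is precisely a proof of the curvilinear lemma of Chandler that the paper cites as Lemma~1), and then one fibres $\tilde V_k(L)$ over the irreducible configuration space with fibres open in a fixed $\mathbb P^{h^0(L)-3k-1}$. Your extra work on smoothness of $\tilde V_k(L)$ and of $\rho$, and the concluding ``smooth $+$ connected $\Rightarrow$ irreducible'' argument, are a valid but slightly longer substitute for the paper's one-line observation that $\tilde V_k(L)$ sits as an open subset of a $\mathbb P^{h^0(L)-3k-1}$-bundle over an irreducible base.
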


Theorem \ref{i1} improves \cite[Theorem 9.1]{k1}, which says that $\tilde{V}_k(L)$ is irreducible if $L$ is $(3k-1)$-very
ample (so from $(3k-1)$ to $(2k-1)$, a huge improvement). Then M. Kemeny apply \cite[Theorem 9.1]{k1} to the case in which
$X$ is a K3 surface with $\mathrm{Pic}(X)\cong \ZZ$, but he also remark that other methods give stronger results (in
particular \cite[Corollary 2.6]{ke1}). For another case for a K3 surface with
$\mathrm{Pic}(X)\cong \ZZ$ see \cite{cd}. In
\cite{ke1, ke2} T. Keitel gives several cases in which
$\tilde{V}_k(L)$ is irreducible, but
in all cases $\mathrm{Pic}(X)$ is asked to be controlled (e.g. $X$ the product of two Pic-independent curves or $\mathrm{Pic}(X)\cong \ZZ$ or
$X$ has no curve with negative self-intersection). Theorem \ref{i1} does not cover \cite[Theorem 1]{cd}, because (with the
notation of \cite{cd}) it would require $g\ge 4\delta -1$, while \cite[Theorem 1]{cd} only requires $g\ge 4\delta -3$. We quote
\cite{ds} for some cases in which $\tilde{V}_d(L)$ is dense in $V_d(L)$, which is thus irreducible. For tools used in
Keitel's proofs and that can be probably used in several other cases, see \cite{gls1, gls2, gls3, gls4, ke1, ke2}.

We thank the referee for help in the organization of the paper.

\section{The proof}
We recall the following lemma, often called \emph{the curvilinear lemma}, which is due to K. Chandler (\cite[Cor. 2.4]{c0},
\cite[Lemma 4]{c1}) and often used to compute the dimensions of secant varieties of varieties embedded in a projective space.

\begin{lemma}\label{a1}
Fix an integral projective variety, a very ample line bundle $L\in
\mathrm{Pic}(X)$ and a finite set $S\subset X_{\reg}$. Set $n:= \dim X$ and $k:=
|S|$. For each $p\in S$ let $2p$ be the closed
subscheme of $X$ with $(\Ii _{p,X})^2$ as its ideal sheaf. Set $Z:= \cup _{p\in S} 2p$. We have $h^0(X,L)-h^0(X,\Ii _Z\otimes
L)=(n+1)k$ if and only if for each choice of a degree $2$ zero-dimensional scheme $Z_p\subset 2p$
we have $h^0(X,\Ii _{\cup _{p\in S} Z_p}\otimes L) = h^0(X,L) -2k$.
\end{lemma}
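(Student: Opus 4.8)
The plan is to recast the statement in terms of linear spans under the projective embedding $\varphi_L\colon X\hookrightarrow\PP^N$, $N:=h^0(X,L)-1$, associated with $|L|$. For a zero-dimensional subscheme $W\subset X$ write $\langle W\rangle\subseteq\PP^N$ for its linear span; since $h^0(X,\Ii_W\otimes L)$ counts the hyperplanes through $\langle W\rangle$, the equality $h^0(X,L)-h^0(X,\Ii_W\otimes L)=\operatorname{length}(W)$ (``$W$ imposes independent conditions on $|L|$'') holds if and only if $\dim\langle W\rangle=\operatorname{length}(W)-1$. Hence the left-hand condition of the lemma reads $\dim\langle Z\rangle=(n+1)k-1$, each $\langle 2p\rangle$ contributing its full $n+1$ dimensions, while the right-hand condition reads $\dim\langle\bigcup_{p\in S}Z_p\rangle=2k-1$ for every choice of the degree-two schemes $Z_p\subset 2p$. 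Since $p\in X_{\reg}$ and $\varphi_L$ is a closed immersion, $\langle 2p\rangle$ is the embedded tangent space $\mathbb{T}_pX$, an honest linear $\PP^n$, and as $Z_p$ runs over the length-two subschemes of $2p$ its span $\langle Z_p\rangle$ runs over exactly the lines of $\mathbb{T}_pX$ through $p$.

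The implication ``$Z$ imposes independent conditions $\Rightarrow$ each $\bigcup_{p\in S}Z_p$ does'' is immediate, since a closed subscheme of a zero-dimensional scheme imposing independent conditions on $|L|$ again imposes independent conditions on $|L|$: the restriction map $H^0(X,L)\to H^0(\bigcup_{p\in S}Z_p,L_{|\bigcup_{p\in S}Z_p})$ factors through the surjection $H^0(X,L)\to H^0(Z,L_{|Z})$.

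For the converse I would argue by induction on the number of points whose double structure has been reinstated. Fix an ordering $S=\{p_1,\dots,p_k\}$ and, for $0\le j\le k$, let $P(j)$ be the assertion that $(2p_1\cup\cdots\cup 2p_j)\cup Z_{p_{j+1}}\cup\cdots\cup Z_{p_k}$ imposes independent conditions on $|L|$ for every choice of the degree-two schemes $Z_{p_{j+1}},\dots,Z_{p_k}$; then $P(0)$ is the hypothesis and $P(k)$ is the conclusion. To pass from $P(j)$ to $P(j+1)$, fix arbitrary degree-two schemes $Z_{p_i}$ for $i>j+1$ and set $\Lambda:=\langle(2p_1\cup\cdots\cup 2p_j)\cup Z_{p_{j+2}}\cup\cdots\cup Z_{p_k}\rangle$. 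Applying $P(j)$ to closed subschemes of the relevant scheme gives, first, $p_{j+1}\notin\Lambda$, and second, for each tangent direction $v$ at $p_{j+1}$, that $\dim\langle\Lambda,\langle Z_{p_{j+1},v}\rangle\rangle=\dim\Lambda+2$, i.e. that the tangent line $\langle Z_{p_{j+1},v}\rangle$ is disjoint from $\Lambda$. The geometric core is then the elementary observation that a linear $\PP^n$, all of whose lines through a point $p\notin\Lambda$ are disjoint from $\Lambda$, is itself disjoint from $\Lambda$; applied to $\mathbb{T}_{p_{j+1}}X=\langle 2p_{j+1}\rangle$ this yields $\dim\langle\Lambda,2p_{j+1}\rangle=\dim\Lambda+n+1$, precisely the span dimension expressing that $(2p_1\cup\cdots\cup 2p_{j+1})\cup Z_{p_{j+2}}\cup\cdots\cup Z_{p_k}$ imposes independent conditions, which is $P(j+1)$.

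The single delicate point—and the place where very ampleness of $L$ is genuinely used—is the identification in the first paragraph of $\langle 2p\rangle$ with a linear $\PP^n$ whose lines through $p$ are exactly the spans of the length-two subschemes of $2p$ (this rests on $\varphi_L$ being an immersion at the smooth point $p$); once that is in hand, the rest is the bookkeeping of lengths and span dimensions indicated above. I expect this linear-algebraic argument with tangent spaces, rather than any $H^1$-vanishing statement, to give the cleanest proof.
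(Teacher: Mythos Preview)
Your proof is correct and rests on the same geometric core as the paper's: under $\varphi_L$ each $\langle 2p\rangle$ is the embedded tangent $\PP^n$ at $p$, and its lines through $p$ are exactly the spans $\langle Z_p\rangle$ of the length-two subschemes $Z_p\subset 2p$. The only difference is organizational: the paper proves the nontrivial implication by contrapositive---assuming the tangent spaces $\langle 2p_i\rangle$ are dependent, it locates the first index where dependence occurs and constructs from a point of $T_{p_{h+1}}X\cap\langle T_{p_1}X,\dots,T_{p_h}X\rangle$ an explicit bad choice of $Z_i$'s---whereas your direct induction (upgrading one $Z_{p_j}$ to the full $2p_j$ at a time via the observation that a linear $\PP^n$ all of whose lines through $p\notin\Lambda$ miss $\Lambda$ must itself miss $\Lambda$) is the logical dual of that same linear-algebra step.
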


\begin{proof}
The ``only if '' part is trivial, because
if
$W\subseteq A\subset X$ with
$A$ a zero-dimensional scheme, then
$h^0(\Ii _A\otimes L)
\ge h^0(\Ii _W\otimes L)+\deg (W) -\deg (A)$.

Now assume $h^0(X,L)-h^0(X,\Ii _Z\otimes
L)<(n+1)k$. We use $|L|$ to see $X$ as a subscheme of $\PP^r$, $r = h^0(X,L) -1$. For each $p\in S$ call $\mu _p$ the maximal
ideal of the local ring $\Oo _{X,p}$. Since $X$ is smooth and of dimension $n$ at $p$, we have $\dim \mu_p/\mu_p^2 =n$, $\dim \Oo
_{X,p}/{\mu_p}^2 = n+1$ and the vector space $\mu_p/\mu_p^2$ is generated by the images of a system of  $n$ regular parameters of
the local ring
$\Oo _{X,p}$. By the definition of $2p$
the sheaf $(\Ii _{p,X})^2$ is the ideal sheaf of $2p$ as a closed subscheme of $X$. Thus the linear span $\langle 2p\rangle$ in
$\PP^r$ of $2p$ is the Zariski tangent space of $X$ at $p$ and $\dim \langle 2p\rangle =n$. Thus the assumption `` 
$h^0(X,L)-h^0(X,\Ii _Z\otimes L)<(n+1)k$ '' is equivalent to assume that the linear spaces $\langle 2p\rangle$, $p\in S$, are
not $k$ distinct and linearly independent linear subspaces of $\PP^r$. Fix $p\in S$ and take a system of homogeneous
coordinates $x_0,\dots ,x_r$ of $\PP^r$, i.e. a basis of $H^0(X,L)$, such that $p = (1:0:\dots :0)$ and $T_pX$ has $x_i=0$ for
all $i>n$ as its equations. Note that
$x_1/x_0,\dots ,x_n/x_0$ induce a regular system of parameters, $y_1,\dots ,y_n$, of the regular local ring $\Oo _{X,p}$. Fix any line
$D\subset T_pX$ containing $p$. $D$ is induced by the equations $x_i=0$ for all $i>n$ of $T_pX$ and by $n-1$ linearly
independent equations in the variables $x_1,\dots ,x_n$, say $\sum _{i=1}^{n} a_{ij}x_i =0$, $1\le j\le n-1$. Let  $J\subset \mu _p$ be the ideal
generated by the union of all $\sum _{i=1}^{n} a_{ij}y_i \in \mu _p$ and
$\mu_p^2$. $J$ gives a degree $2$ scheme $Z_D =\mathrm{Spec}(\Oo _{X,p}/J)\subset \mathrm{Spec}(\Oo _{X,p}/\mu _p^2) = 2p$, which is the only degree $2$ connected subscheme of $\PP^r$ containing
$p$ and contained in $D$ (and thus spanning $D$). Moreover $Z_D$ is the scheme-theoretic intersection of $2p$ and $D$. It is
also the degree
$2$ connected subscheme of
$D$ with
$p$ as its reduction.

We order the points $p_1,\dots ,p_k$ of $S$ and set $S_i:= \{p_1,\dots ,p_i\}$.
Set $S_0:= \emptyset$. Let $h$ be the last non-negative integer $<k$ such that the linear space $L_h$ spanned by $T_{p_1}X, \dots ,T_{p_h}X$ has dimension $h(n+1)-1$. By the definition of $h$ we have
$T_{p_{h+1}}X\cap L_h \ne \emptyset$. Fix $o\in T_{p_{h+1}}X\cap L_h $.  There is $E\subseteq \{1,\dots ,h\}$
and $o_i\in T_{p_i}X$ such that
$o$ is in the linear span of the set $\{o_i\}_{i\in E}$. 
 For each $p\in S$ the
$k$-dimensional linear subspace $T_pX$ is the union of all lines of $\PP^r$ passing through $p$ and contained in $T_pX$. We
saw the each such line is spanned by a degree $2$ subscheme of $2p$. Thus for each $a\in T_pX$ there is a degree $2$
connected zero-dimensional scheme $W\subset 2p$ such that $a$ is contained in the line $\langle W\rangle$ spanned by $W$; $W$
is unique if and only if $a\ne p$. Thus there are degree $2$ connected zero-dimensional schemes $Z_i\subset 2p_i$, $i \in E\cup \{h+1\}$, such that  $o$ is contained in the line $\langle Z_{h+1}\rangle$ and $o_i$, $i\in E$, is
contained in the line $\langle Z_i\rangle$. If $E\subsetneq \{1,\dots ,h\}$ for all $i\in \{1,\dots ,h\}\setminus E$ fix any
degree $2$ connected scheme $Z_i \subset 2p_i$. Thus $o$ is in the linear span of
the union of the $h$ lines $\cup _{i=1}^{h} \langle Z_i\rangle$. Thus $\cup _{i=1}^{h+1} \langle Z_i\rangle$ spans a linear
space of dimension $<(h+1)n-1$. If $h\le k-2$ call $Z_i$, $i=h+2,\dots ,k$, an arbitrary connected degree $2$ zero-dimensional
scheme
$Z_i\subset 2p_i$.  By construction the lines $\langle Z_i\rangle$, $1\le i\le k$, are either not distinct 
or not linearly independent. 
Thus $h^0(X,L) -h^0(X,\Ii _{\cup Z_i}\otimes L) > 2k$.
\end{proof}

Note that the scheme $Z$ in the statement of Lemma \ref{a1} have degree $k(n+1)$, while the scheme $\cup _{p\in S} Z_p$ has
degree $2k$.

\begin{remark}\label{a2}
Take $(X,L)$ as in Lemma \ref{a1}. An obvious consequence of Lemma \ref{a1} is that $h^0(L)\ge k(n+1)$. In the set-up of
Theorem \ref{i1} (hence with $n=2$) we want to have $h^0(L)>3k$ (not just $h^0(L)\ge 3k$, which would follows from the
$k$-spannedness of $L$ and Lemma \ref{a1}). If $L$ is $k$-spanned and $h^0(L) =3k$, then $\tilde{V}_\delta (L)=\emptyset$ (see the proof of
Theorem \ref{i1}). This is also explicitly stated in the set-up of K3-surfaces in \cite[Remark at page 173]{k1}.
\end{remark}

\begin{proof}[Proof of Theorem \ref{i1}:]
Fix any finite set $S\subset X$ such that $|S| =k$. Set $V(S,L):= \{D\in \tilde{V}_k(L)\mid S=\mathrm{Sing}(D)\}$. Set $Z:=
\cup _{p\in S} 2p$. By Lemma
\ref{a1} we have
$h^0(\Ii _Z\otimes L) =h^0(L) -3k$. Note that $|\Ii
_Z\otimes L|$ parametrizes all curves $D\in |L|$ which are singular at each point of $S$. Thus $V(S,L)=\emptyset$ if
$h^0(L)=3k$, while $V(S,L)$ is a Zariski open subset of a projective space of dimension $h^0(L)-3k-1$ if $h^0(L)>3k$.
Since this is true for all subsets of $X$ with cardinality $k$, we get $\tilde{V}_k(L)=\emptyset$ if $h^0(L) =3k$.
Now assume $h^0(L) >3k$. Since the set of all subsets of $X$ with cardinality $k$ forms an irreducible variety of dimension
$2k$, we would get that
$\tilde{V}_k(L)$ is irreducible of dimension
$h^0(L)-k-1$ if $V(S,L)\ne \emptyset$ for a general $S$. Note that if $V(S,L) =\emptyset$ for a general $S$ with cardinality $k$, then the same holds
for all $S$ with cardinality $k$.
\end{proof}

\begin{remark}
There is a list of pairs $(X,L)$ with $L$ a $k$-very ample line bundle with $L^2\le 4k+4$ (\cite[Table 2]{d} and \cite[Remark
1.4 and note 1]{kn}).
\end{remark}

\providecommand{\bysame}{\leavevmode\hbox to3em{\hrulefill}\thinspace}

\end{document}